\newcommand{\Vx}{%
\ooalign{\Large $\bigcup$\cr\hss\raisebox{-0.05ex}{\scriptsize $\cdot$}\hss}}
\newcommand{\arxiv}[1]{\href{http://arxiv.org/abs/#1}{{\tt arXiv:#1}}}
\newcounter{maintheorem}[equation]
\def\themaintheorem{\thesection.\@arabic \c@maintheorem}
\def\theequation{\thesection.\@arabic \c@equation}
\def\theenumi{\@alph\c@enumi}
\def\theenumii{\@roman\c@enumii}
\theoremstyle{plain}
\newtheorem{theorem}[equation]{Theorem}
\newtheorem*{theorem*}{Theorem}
\newtheorem{corollary}[equation]{Corollary}
\newtheorem{proposition}[equation]{Proposition}
\theoremstyle{definition}
\newenvironment{customthm}[1]
  {\innercustomthm}
  {\endinnercustomthm}
\newtheorem{remark}[equation]{Remark}
\newtheorem{example}[equation]{Example}
\newtheorem{definition}[equation]{Definition}
\newtheorem{notation}[equation]{Notation}
\newtheorem{discussion}[equation]{Discussion}
\newtheorem{observation}[equation]{Observation}
\newtheorem{construction}[equation]{Construction}
\newcommand{\ui}{\underline{i}}
\newcommand{\m}{{\mathfrak m}}
\def\to{\longrightarrow}
\DeclareMathOperator{\ann}{Ann}
\DeclareMathOperator{\Hom}{Hom}
\DeclareMathOperator{\Hilb}{Hilb}
\DeclareMathOperator{\socle}{Soc}
\def\RDerChar{\mathbf{R}}
\def\RDer{\@ifnextchar[{\R@Der}{\ensuremath{\RDerChar}}}
\def\R@Der[#1]{\ensuremath{\RDerChar^{#1}}}
\begin{document}
\begin{abstract}
 In this paper we study  the O-sequences of the local (or graded) $K$-algebras of socle degree $4.$ More precisely, we prove that an O-sequence $h=(1, 3, h_2, h_3, h_4)$, where $h_4 \geq 2,$ is the $h$-vector of a local level $K$-algebra if and only if $h_3\leq 3 h_4.$ We also prove that $h=(1, 3, h_2, h_3, 1)$  is the $h$-vector of a local Gorenstein $K$-algebra  if and only if $h_3 \leq 3$ and $h_2 \leq \binom{h_3+1}{2}+(3-h_3).$ In each of these cases we give an effective method to construct a local level $K$-algebra with a given $h$-vector. Moreover we  refine a result by Elias and Rossi by showing  that if $h=(1,h_1, h_2, h_3, 1)$ is an unimodal Gorenstein O-sequence, then $h$ forces the corresponding   Gorenstein  $K$-algebra to be canonically graded if and only if $h_1=h_3 $ and $h_2=\binom{h_1+1}{2}, $  that is the $h$-vector is maximal.
\end{abstract}

\title[]
{Artinian level algebras of socle degree 4}

\author[Masuti] {Shreedevi K. Masuti}
\address{Dipartimento di Matematica, Universit{\`a} di Genova, Via Dodecaneso 35, 16146 Genova, Italy}
\email{masuti@dima.unige.it}

\author[Rossi]{M.~E.~Rossi}
\address{Dipartimento di Matematica, Universit{\`a} di Genova, Via Dodecaneso 35, 16146 Genova, Italy}
\email{rossim@dima.unige.it}

\date{\today}
\thanks{The first author was supported by INdAM COFOUND Fellowships cofounded by Marie Curie actions, Italy. The second author  was partially supported by PRIN 2014
``Geometria delle varieta' algebriche"}

\keywords{Macaulay's inverse system, Hilbert functions, Artinian Gorenstein and level algebras, canonically graded algebras}
\subjclass[2010]{Primary: 13H10; Secondary: 13H15, 14C05}
\maketitle
\section{Introduction}
Let $(A, \m)  $ be an Artinian local or graded  $K$-algebra  where $K$ is  an algebraically closed field of characteristic zero. Let $\socle(A) = (0 : \m) $  be the  socle of $A.$ We denote by $s$ the {\it{socle degree}}  of $A, $ that is the maximum integer $j$ such that $\m^j \neq 0.$ The {\it{type}} of $A$ is $\tau := \dim_K \socle(A). $ Recall that $A$ is said to be {\it level} of type $\tau$ if $\socle(A)=\m^s$ and $\dim_K \m^s=\tau.$ If $A$ has type $1,$ equivalently $ \dim_K  \socle(A) =1, $ then $A$ is {\it Gorenstein}. In the literature   local rings with low socle degree, also called {\it{short local rings,}} have emerged as a testing ground for properties of infinite free resolutions (see \cite{ais08}, \cite{ahs}, \cite{hs11}, \cite{sj79}, \cite{roos05}, \cite{CRV}). They have been also extensively studied  in problems related to the irreducibility and the smoothness of the punctual Hilbert scheme $\Hilb_d(\mathbb{P}^n_K) $ parameterizing zero-dimensional subschemes $\mathbb{P}^n_K$ of degree $d, 
$  see among 
others \cite{
poonen}, \cite{ev}, \cite{cevv}, \cite{cern}.   In this paper we study the structure of level  $K$-algebras of socle degree $4, $ hence $\m^5=0.$ One of the most significant information on the structure is given by the Hilbert function. 

By definition,  the Hilbert function of $A,$ $h_i=h_i(A) = \dim_K \m^i/\m^{i+1}, $ is the Hilbert function of the associated graded ring $gr_\m(A)=\oplus_{i\geq 0}\m^i/\m^{i+1}.$ We also say that $h=(h_0,h_1,\ldots,h_s)$ is the $h$-vector of $A.$ In \cite{mac27} Macaulay 
characterized the possible sequences of positive integers $h_i $ that can occur as the Hilbert function of $A.$  
Since then there has been a great interest in commutative algebra in determining the $h$-vectors  that can occur as the Hilbert function of $A$ with additional properties (for example, complete intersection, Gorenstein, level, etc). A sequence of positive integers $h=(h_0,h_1,\ldots,h_s)$ satisfying Macaulay's criterion, that   is $h_0=1$ and $h_{i+1} \leq h_i^{\langle i \rangle}$ for $i=1,\ldots,s-1, $ is called an O-sequence.   A sequence $h=(1,h_1,\ldots,h_s)$ is said to be a {\it{level (resp. Gorenstein) O-sequence}}  if $h$ is the Hilbert function of some Artinian level (resp. Gorenstein) $K$-algebra $A.$ Remark that $h_1$ is the embedding dimension and,  if $A$ is level,  $h_s$ is the type   of $A. $     Notice that a level O-sequence is not necessarily the Hilbert function of an Artinian level graded   $K$-algebra. This is because the Hilbert function of the level ring $(A,\m) $ is the Hilbert funtion of $gr_{\m}(A) $ which is not necessarily level. From now on we say that $h$ is a graded level O-sequence if $h$ is the Hilbert function of a level graded standard $K$-algebra.
  For instance  it is well known that the $h$-vector of a  Gorenstein graded $K$-algebra is symmetric, but this is no longer true for a  Gorenstein local ring.  Characterize  level  O-sequences  is a wide open problem in commutative algebra. The problem is difficult and very few is known even in the graded case as evidenced by   \cite{ghms}.  
In the following table we give a summary of known results: 
\vskip 2mm
\begin{center}
\begin{tabular}{|c|c|c|}
\hline
{\bf Characterization of}&{\bf Graded}&{\bf Local}\\
\hline
Gorenstein O-sequences with $h_1=2$&  \cite[Theorem 4.2]{stanley} & \cite[Theorem 2.6]{bertella}\\
\hline
level O-sequences with $h_1=2$& \cite[Theorem 4.6A]{iar84}, \cite{iar04} &\cite[Theorem 2.6]{bertella}\\
\hline
Gorenstein O-sequences with $h_1=3$& \cite[Theorem 4.2]{stanley}& Open\\
\hline
& Open. In \cite{ghms} authors  &\\ 
level O-sequences with $h_1=3$& gave a complete list with &\\
and $\tau \geq 2$&  $s \leq 5$ or $s=6$ and $\tau =2$ &  Open\\
\hline
level O-sequences with $s \leq 3$&  \cite{stefani}, \cite{ghms} & \cite[Theorem 4.3]{stefani}\\
\hline
\end{tabular}
\end{center}
\vskip 2mm
\noindent 
In this paper we fill  the above table by characterizing the   Gorenstein and  level O-sequences with a particular attention to    socle degree $4$ and embedding dimension $h_1=3.$ 

\noindent   In our setting we can write $A=R/I $ where $R=K[[x_1,\dots, x_r]] $ (resp. $K[x_1,\ldots,x_r]$) is the formal power series ring (resp. the polynomial ring with standard grading) and $I$ an ideal of $R.$  We say that $A$ is graded when it can be presented as $R/I$ where $I$ is an homogeneous ideal in $R=K[x_1,\ldots,x_r].$ Without loss of generality we assume that $ h_1= \dim_K \m/\m^2=r.$ \vskip 2mm
\noindent 

Recall that the {\it socle type} of $A=R/I$ is the sequence $E=(0,e_1,\ldots,e_s),$ where 
$$e_i:=\dim_K((0:\m) \cap \m^i) /(0:\m) \cap \m^{i+1}).$$ 
It is known that for all $i \geq 0,$ 
\begin{eqnarray}
\label{eqn:NaturalBound}
 h_i \leq \min\{\dim_K R_i,e_i \dim_K R_{0}+e_{i+1} \dim_K R_1+\cdots+e_s \dim_K R_{s-i}\}
\end{eqnarray}
(see \cite{iar84}). 
Hence a necessary condition for $h$ to be a   level O-sequence is that $h_{s-1}  \leq h_1  h_s$ being $e_s=h_s$ and $e_i=0$ otherwise. In the following theorem we prove that  this condition is also sufficient for $h=(1,3,h_2, h_3, h_4)$  to be a   level O-sequence, provided $h_4  \geq 2.$ However, if $h_4=1, $ we need an additional assumption  for $h$ to be a   Gorenstein O-sequence.  We remark that the result can not be extended  to $h_1>3 $ (see Example \ref{Example:h1=4}). 
\begin{customthm}{1}
 \label{thm:s=4Intro}
Let $h=(1,3,h_2, h_3, h_4)$ be an O-sequence.  
\begin{enumerate}
 \item \label{thm:GoreCaseIntro} Let $h_4 =1.$ Then $h$ is a   Gorenstein O-sequence if and only if $h_3 \leq 3$ and $h_2 \leq \binom{h_3+1}{2}+(3-h_3).$
 \item \label{thm:levelCaseIntro} Let $h_4 \geq 2$. Then $h$ is a   level O-sequence if and only if $h_3 \leq 3 h_4.$
\end{enumerate}
\end{customthm}
The proof of the above result is effective in the sense that in each case  we  construct a local level $K$-algebra with a  given $h$-vector  verifying the necessary conditions (see Theorem \ref{thm:s=4}).

 Combining  Theorem \ref{thm:s=4Intro}\eqref{thm:levelCaseIntro} and results in \cite{ghms} we show that there are   level O-sequences which are not admissible in the graded case  (see Section \ref{section:s=4,c=3}, Table \ref{table:locallevelSeq}). A similar behavior was observed in \cite{stefani} for socle degree   $3.$ Theorem \ref{thm:s=4Intro}\eqref{thm:GoreCaseIntro} is a consequence of the following more general result which holds for any embedding dimension:

\begin{customthm}{2}
\label{thm:ConForGorIntro}
\begin{enumerate}
 \item \label{thm:NecConForGorIntro}
If $(1, h_1,\ldots,h_{s-2}, h_{s-1},1)$ is a   Gorenstein O-sequence, then 
 \begin{equation}
 \label{eqn:NecCondIntro}
 h_{s-1} \leq h_1 \mbox{ and } h_{s-2} \leq  \binom{h_{s-1}+1}{2}+(h_1 - h_{s-1}).
  \end{equation}
  \item \label{thm:SuffFors=4Intro} If $h=(1, h_1, h_2, h_3, 1)$ is an unimodal O-sequence  
  satisfying  \eqref{eqn:NecCondIntro}, then $h$ is a   Gorenstein O-sequence. 
\end{enumerate}
\end{customthm}

The bound \eqref{eqn:NecCondIntro} improves the bound for $h_{s-2}$ given in \eqref{eqn:NaturalBound}.    

\vskip 3mm
If  $A$ is a Gorenstein local $K$-algebra with symmetric $h$-vector, then $gr_{\m}(A) $ is Gorenstein, see \cite{iar94}.   It is a natural question to ask, in this case, whether  $A$ is  analytically isomorphic to $gr_{\m}(A). $  Accordingly with the definition given in \cite[Page 408]{ems78} and in \cite{er12}, recall that an Artinian local $K$-algebra $(A, \m)$ is said to be {\it canonically graded} if there exists a $K$-algebra isomorphism between $A$ and its associated graded ring $gr_{\m}(A).$ 

\vskip 2mm For instance J. Elias and M. E. Rossi  in \cite{er12} proved that every Gorenstein $K$-algebra with  symmetric $h$-vector and $\m^4=0$ ($s \le 3$)  is   canonically graded.   
 A local $K$-algebra $A$ of socle type $E$ is said to be {\it compressed} if equality holds in \eqref{eqn:NaturalBound} for all $1\leq i \leq s, $ equivalently the $h$-vector is maximal (see \cite[Definition 2.3]{iar84}). In \cite[Theorem 3.1]{er15} Elias and Rossi proved that if $A$ is any compressed Gorenstein local $K$-algebra of socle degree $s \le 4,$   then $A$ is canonically graded. In Section \ref{section:CanGra}   we prove that if the socle degree is $4, $ then the assumption can not be relaxed. More precisely only the maximal $h$-vector  forces every corresponding   Gorenstein $K$-algebra to be canonically graded. We prove that if $h$ is unimodal and  not maximal, then there exists a Gorenstein $K$-algebra with Hilbert function $h$ which is not canonically graded.
   To prove that a local $K$-algebra is not canonically graded  is in general a very difficult task. 
See also \cite{joa} for interesting discussions.

\begin{customthm}{3} \label{customthm}
Let $h=(1,h_1,h_2,h_3,1)$ be a     Gorenstein O-sequence with $h_2 \ge h_1.$  Then every local Gorenstein $K$-algebra with  Hilbert function $h$ is necessarily canonically graded if and only if $h_1=h_3$ and $h_2=\binom{h_1+1}{2}.$  
\end{customthm}

The main tool of the paper is  Macaulay's inverse system \cite{mac16} which gives a one-to-one correspondence between ideals $I\subseteq R  $ such that $R/I$ is an Artinian local ring and finitely generated $R$-submodules of a polynomial ring. In Section \ref{section:preliminaries} we gather preliminary results   needed for this purpose.   We prove Theorems \ref{thm:s=4Intro} and \ref{thm:ConForGorIntro} in Section \ref{section:s=4,c=3}, and Theorem \ref{customthm} in Section \ref{section:CanGra}.

We have used Singular \cite{singular}, \cite{Elias} and CoCoA \cite{cocoa} for various computations and examples.

\section*{Acknowledgements}
We thank  Juan Elias for providing us the updated version of {\sc Inverse-syst.lib} for computations.

\section{preliminaries}
\label{section:preliminaries}

\subsection{Macaulay's Inverse System} 
In this subsection we recall some results on Macaulay's inverse system which we will use in the subsequent sections. This theory is well-known in the literature, especially in the graded setting (see for example \cite[Chapter IV]{mac16} and \cite{ik99}). However, the local case is not so well explored. We refer the reader to \cite{ems78}, \cite{iar94} for an extended treatment.  

It is known that the injective hull of $K$ as $R$-module is isomorphic to a divided power ring $P=K[X_1,\ldots,X_r] $ which has a structure of $R$-module by means of the following action
$$
\begin{array}{ cccc}
\circ: & R  \times P  &\longrightarrow &  P   \\
                       &       (f , g) & \to  &  f  \circ g = f (\partial_{X_1}, \dots, \partial_{X_r})(g)
\end{array}
$$
where $  \partial_{X_i} $ denotes the partial derivative with respect to $X_i.$  For the sake of simplicity from now on we will use $x_i $ instead of the capital letters $X_i.$ 
If $\{f_1,\ldots,f_t\} \subseteq P$ is a set of polynomials, we will denote by $\langle f_1,\ldots,f_t \rangle_R$ the $R$-submodule of $P$ generated by $f_1,\ldots,f_t$, i.e., the $K$-vector space generated by $f_1,\ldots,f_t$ and by the corresponding derivatives of all orders. 
We consider the exact pairing of $K$-vector spaces:
$$
\begin{array}{ cccc}
\langle\ , \ \rangle : & R  \times P  &\longrightarrow &   K   \\
                    &       (f , g) & \to  & ( f  \circ g ) (0).
\end{array}
$$
For any ideal  $I\subset R$ we define the following $ R$-submodule of $P $ called Macaulay's inverse system: 
$$
 {I^{\perp}}:=\{g\in P\ |\  \langle f, g \rangle = 0 \ \ \forall f  \in I \ \}.
$$  
Conversely, for every $R$-submodule $M$ of $P$ we define
$$
\ann_R(M):=\{g \in R \ |\  \langle g, f \rangle = 0 \ \ \forall f \in M  \} 
$$  which  is an ideal of $R. $
If $M$ is generated by polynomials $\underbar{f}:=f_1,\ldots,f_t,$ 
with $f_i \in P, $ then we will write $\ann_R(M)= \ann_R(\underbar{f}) $ and  $ A_{\underbar{f}}=R/\ann_R(\underbar{f}).$ 

By using Matlis duality one proves  that there exists a one-to-one correspondence between ideals $I\subseteq R  $ such that $R/I$ is an Artinian local ring and $R$-submodules $M$ of $P$ which are finitely generated. More precisely,  Emsalem in \cite[Proposition 2]{ems78} and Iarrobino in \cite[Lemma 1.2]{iar94}   proved that:

\begin{proposition}  
There is a one-to-one correspondence between ideals $I$ such that $R/I$ is a level local ring of socle degree $s$ and type $\tau$ and $R$-submodules of $P$ generated by $\tau$ polynomials of degree $s$ having linearly independent forms of degree $s.$ The correspondence is defined as follows:
\begin{eqnarray*}
\left\{  \begin{array}{cc} I \subseteq R \mbox{ such that } R/I \\
\mbox{ is a level local ring of }\\
\mbox{ socle degree $s$ and type } \tau 
  \end{array} \right\}  \ &\stackrel{1 - 1}{\longleftrightarrow}& \ 
\left\{ \begin{array}{cc} M \subseteq P \mbox{ submodule generated by} \\
 \tau \mbox{ polynomials of degree} \\
s  \mbox{ with l.i. forms  of degree } s 
\end{array} \right\} \\
I \ \ \ \ \ &\longrightarrow& \ \ \ \ \ I^\bot  \ \ \ \ \ \ \ \ \qquad \ \ \ \  \ \ \ \qquad \ \ \ \  \ \ \ \\
\ann_R(M) \ \ \ \ \  &\longleftarrow& \ \ \ \ \  M \ \ \ \ \ \ \ \ \ \ \ \ \ \ \ \  \qquad \qquad \ \ \ \  \ \ \
\end{eqnarray*}
 \end{proposition}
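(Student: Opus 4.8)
The plan is to deduce the stated bijection from the general Matlis-duality correspondence recalled just above (between Artinian local quotients $R/I$ and finitely generated submodules $M \subseteq P$), by matching, on each side, the conditions that single out level rings of socle degree $s$ and type $\tau$. Since the maps $I \mapsto I^{\perp}$ and $M \mapsto \ann_R(M)$ are already known to be mutually inverse, it suffices to check that $I \mapsto I^{\perp}$ carries a level ideal of socle degree $s$ and type $\tau$ to a submodule generated by $\tau$ polynomials of degree $s$ with linearly independent forms of degree $s$, and that $M \mapsto \ann_R(M)$ carries such a submodule back to a level ideal of the prescribed socle degree and type; the bijection between the two subclasses then follows formally.

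First I would record two elementary facts about the pairing. Writing $P_{\leq j}$ for the polynomials of degree $\leq j$, the identity $(\m^{j+1})^{\perp} = P_{\leq j}$ follows at once from $\langle f, g\rangle = (f \circ g)(0)$ and the monomial duality between $R$ and $P$: an element of $\m^{j+1}$ differentiates $g$ more than $j$ times, so it pairs to zero with $g$ exactly when $\deg g \le j$. Setting $M = I^{\perp}$, this yields two dictionary entries. On one hand $R/I$ has socle degree $s$ (that is, $\m^{s+1}\subseteq I$ but $\m^{s}\not\subseteq I$) if and only if $M$ contains a polynomial of degree $s$ but none of higher degree, i.e. $s$ is the top degree occurring in $M$. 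On the other hand, identifying $M = I^{\perp}$ with the Matlis dual $\Hom_R(R/I, P) = (0 :_P I)$ and using that $P$ is injective, the exactness of $\Hom_R(-,P)$ identifies $\socle(A) = \Hom_R(K, A)$ with the $K$-dual of $K \otimes_R M = M/\m \circ M$; hence $\tau = \dim_K \socle(A) = \dim_K M/\m \circ M$ equals the minimal number of generators of $M$.

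Next I would translate the level condition $\socle(A) = \m^{s} A$. Since $M$ is the Matlis dual of $A$, the pairing descends to a perfect pairing $A \times M \to K$, under which the orthogonal of the image of $\m^{i}$ is $M \cap (\m^{i})^{\perp} = M_{\leq i-1}$; comparing dimensions gives $\dim_K \m^{i}A = \dim_K M/M_{\leq i-1}$. Because $\m^{s} A \subseteq \socle(A)$ always holds, $A$ is level of socle degree $s$ precisely when $\dim_K \m^{s} A = \dim_K \socle(A)$, i.e. when $\dim_K M/M_{\leq s-1} = \dim_K M/\m \circ M$. As $\m \circ M \subseteq M_{\leq s-1}$ (differentiation lowers degree) and $M$ has top degree $s$, this is equivalent to the single condition $\m \circ M = M_{\leq s-1}$.

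Finally I would verify that "$\m \circ M = M_{\leq s-1}$, top degree $s$, and $\mu(M) = \tau$" is equivalent to "$M$ is generated by $\tau$ degree-$s$ polynomials whose forms of degree $s$ are linearly independent," and this is where the actual content lies. For the forward direction, level forces $M/\m \circ M = M/M_{\leq s-1} \cong V$, where $V \subseteq P_{s}$ is the space of degree-$s$ leading forms of elements of $M$; lifting a basis of $V$ to degree-$s$ elements $g_1, \dots, g_\tau \in M$ produces, by Nakayama, a minimal generating set with linearly independent top forms. For the converse, if $g_1, \dots, g_\tau$ have degree $s$ and linearly independent forms $[g_i]_s$, then any $m = \sum f_i \circ g_i \in M$ of degree $\le s-1$ must have vanishing degree-$s$ part $\sum f_i(0)\,[g_i]_s$, forcing every $f_i \in \m$ and hence $m \in \m \circ M$; this gives $M_{\leq s-1} = \m \circ M$ (equivalently level), while the same independence shows $\mu(M) = \tau$ and fixes the top degree at $s$. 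I expect the main obstacle to be handling the local (non-graded, merely filtered) geometry correctly: one must consistently use that the degree filtration on $P$ is dual to the $\m$-adic filtration on $R$, and that "forms of degree $s$" means the leading forms of the generally inhomogeneous generators, so that the Nakayama and dimension bookkeeping identifying $M/\m\circ M$ with the space of top forms goes through.
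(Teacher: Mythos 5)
Your proof is correct. The paper itself gives no proof of this proposition---it is quoted from \cite[Proposition 2]{ems78} and \cite[Lemma 1.2]{iar94}---and your argument is exactly the standard Matlis-duality dictionary used in those references: socle degree of $R/I$ corresponds to the top degree of $M=I^{\perp}$ via $(\m^{j+1})^{\perp}=P_{\le j}$, type corresponds to $\dim_K M/\m\circ M=\mu(M)$ via $\socle(A)^{\vee}\cong K\otimes_R M$, and the level condition $\socle(A)=\m^s$ corresponds to $\m\circ M=M\cap P_{\le s-1}$, which you correctly show (Nakayama plus the leading-form computation, valid in characteristic zero where the pairing is perfect) is equivalent to $M$ being generated by $\tau$ polynomials of degree $s$ with linearly independent degree-$s$ forms.
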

 
\noindent By \cite[Proposition 2(a)]{ems78}, the action $\langle~ ,~ \rangle $ induces the 
following isomorphism of $K$-vector spaces:
\begin{equation} 
\label{pairing}  (R/I)^* \simeq   {I^{\perp}}, 
\end{equation}
(where $(R/I)^*$ denotes the dual with respect to the pairing $\langle~ ,~ \rangle $ induced on $R/I$). 
Hence $\dim_K R/I  = \dim_K I^{\perp}.$ As in the graded case, it is possible to compute the Hilbert function of $A=R/I $ via the inverse system.
We define the following $K$-vector space:
\begin{equation*} \label{eqn:DualOfI}
 (I^{\perp})_i := {\frac{ I^{\perp} \cap P_{\le i} +  P_{< i}}{ P_{< i}}}.
\end{equation*}
Then, by (\ref{pairing}), it is known that
\begin{equation} \label{H2}
h_i(R/I)= \dim_K    (I^{\perp})_i .
\end{equation}

\subsection{Q-decomposition}
It is well-known that the Hilbert function of an Artinian graded Gorenstein $K$-algebra is symmetric, which is not true in the local case. The problem comes from the fact that, in general, the associated algebra $G:=gr_{\m}(A)$ of a Gorenstein local algebra $A$ is no longer Gorenstein. However, in  \cite{iar94} Iarrobino proved that the Hilbert function of a Gorenstein local $K$-algebra $A$ admits a ``symmetric'' decomposition. To be more precise,   
consider a filtration  of $G $  by a descending sequence of ideals:
$$ G = C(0) \supseteq C(1) \supseteq \dots \supseteq  C(s)=0,  $$
where 
$$C(a)_i:=\frac{(0:\m^{s+1-a-i}) \cap \m^i}{(0:\m^{s+1-a-i})\cap \m^{i+1}}.$$
Let
$$ Q(a)= C(a)/C(a+1).$$
Then 
\begin{equation*}
\{Q(a) : a=0,\ldots,s-1\}
\end{equation*}
is called {\it Q-decomposition} of the associated graded ring $G.$ We have
\begin{eqnarray*}
h_i(A)=\dim_K G_i =\sum_{a=0}^{s-1} \dim_K Q(a)_i. 
\end{eqnarray*} Iarrobino \cite[Theorem 1.5]{iar94} proved that if $A=R/I$ is a Gorenstein local ring then for all $a=0,\ldots,s-1,$ $Q(a)$ is a reflexive graded $G$-module, up to a shift in degree: $\Hom_K(Q(a)_i,K) \cong Q(a)_{s-a-i}.$ Hence the Hilbert function of $Q(a)$ is symmetric about $\frac{s-a}{2}.$ Moreover, he showed that $Q(0) = G/C(1) $ is the unique (up isomorphism) socle degree $s$ graded Gorenstein quotient of $G.$ Let  $f=f[s]+{\text{ lower degree terms...} } $ be a polynomial in $P$ of degree $s$ where  $f[s] $ is the homogeneous part of degree $s$   and consider $A_f$  the corresponding  Gorenstein local $K$-algebra. Then, $Q(0) \cong R/\ann_R(f[s])$ (see \cite[Proposition 7]{ems78} and \cite[Lemma 1.10]{iar94}).

\section{Characterization of   level  O-sequences}
\label{section:s=4,c=3}
In this section we characterize   Gorenstein and level O-sequences of socle degree $4$ and  embedding dimension  $3.$   First if $h=(1,h_1,\ldots,h_s)$ is a   Gorenstein O-sequence (in any embedding dimension),    we give an upper bound on $h_{s-2}$ in terms of $h_{s-1}$ which improves the already known inequality   given in \eqref{eqn:NaturalBound}. 

\begin{theorem}
 \label{thm:ConForGor}
\begin{enumerate}
 \item \label{thm:NecConForGor}
If $(1, h_1,\ldots,h_{s-2}, h_{s-1},1)$ is a   Gorenstein O-sequence, then 
 \begin{equation}
 \label{eqn:NecCond}
 h_{s-1} \leq h_1 \mbox{ and } h_{s-2} \leq \left(\binom{h_{s-1}+1}{2}+(h_1 - h_{s-1})\right).
  \end{equation}
  \item \label{thm:SuffFors=4} If $h=(1, h_1, h_2, h_3, 1)$ is an O-sequence such that 
  $h_2\geq h_3$ and it satisfies \eqref{eqn:NecCond}, then $h$ is a   Gorenstein O-sequence. 
\end{enumerate}
\end{theorem}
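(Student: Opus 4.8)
The plan is to run both parts through Iarrobino's symmetric $Q$-decomposition of $G = gr_\m(A)$ recalled in Section~\ref{section:preliminaries}. For part~\eqref{thm:NecConForGor}, let $A = R/I$ be Gorenstein of socle degree $s$ with $I^\perp = \langle f\rangle_R$, and write $g = f[s]$ for the leading form, so $Q(0) \cong R/\ann_R(g)$ is graded Gorenstein of socle degree $s$. First I would record the elementary consequences of the decomposition: since $Q(0)_0 = K$ and $\sum_a \dim_K Q(a)_0 = h_0 = 1$, we get $Q(a)_0 = 0$ for all $a \ge 1$, and by the symmetry of $Q(a)$ about $(s-a)/2$ this forces $Q(1)_{s-1} = 0$ and $Q(2)_{s-2} = Q(2)_0 = 0$. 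As $Q(a)$ is concentrated in degrees $0, \dots, s-a$, only $Q(0)$ and $Q(1)$ reach degree $s-1$, so $h_{s-1} = \dim_K Q(0)_{s-1} = \dim_K Q(0)_1$, the last equality by symmetry of $Q(0)$; in particular $Q(0)$ has embedding dimension $h_{s-1}$ and $h_{s-1} \le h_1$.

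For $h_{s-2}$, only $Q(0), Q(1), Q(2)$ reach degree $s-2$, whence, using symmetry again,
$$ h_{s-2} = \dim_K Q(0)_2 + \dim_K Q(1)_1 + \dim_K Q(2)_0 = \dim_K Q(0)_2 + \dim_K Q(1)_1. $$
Now $\dim_K Q(0)_2 \le \binom{h_{s-1}+1}{2}$ by Macaulay's bound applied to the standard graded algebra $Q(0)$ of embedding dimension $h_{s-1}$, while $\dim_K Q(1)_1 \le \sum_{a\ge 1}\dim_K Q(a)_1 = h_1 - \dim_K Q(0)_1 = h_1 - h_{s-1}$. Adding these two estimates yields exactly \eqref{eqn:NecCond}.

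For part~\eqref{thm:SuffFors=4} the strategy is to prescribe the $Q$-decomposition and then realize it by an explicit inverse system. With $s = 4$ the pieces are forced into the shape $Q(0) = (1, q_1, q_2, q_1, 1)$, $Q(1) = (0, r_1, r_1, 0)$, $Q(2) = (0, t_1, 0)$, $Q(3) = 0$ (again from $Q(a)_0 = 0$ for $a \ge 1$ together with symmetry), and matching Hilbert functions degree by degree gives $q_1 = h_3$, $q_2 + r_1 = h_2$ and $q_1 + r_1 + t_1 = h_1$. I would then set $r_1 = \max\{0,\, h_2 - \binom{h_3+1}{2}\}$, $q_2 = h_2 - r_1 = \min\{h_2,\, \binom{h_3+1}{2}\}$ and $t_1 = h_1 - h_3 - r_1$; the hypotheses $h_3 \le h_1$ and $h_2 \le \binom{h_3+1}{2} + (h_1 - h_3)$ are precisely what guarantees $r_1, t_1 \ge 0$, while $h_2 \ge h_3$ guarantees $h_3 \le q_2 \le \binom{h_3+1}{2}$. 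Hence $(1, h_3, q_2, h_3, 1)$ is a unimodal symmetric O-sequence whose first-half difference $(1, h_3 - 1, q_2 - h_3)$ is again an O-sequence, so it is an SI-sequence and therefore a genuine graded Gorenstein sequence; I choose a quartic $g$ in variables $x_1, \dots, x_{h_3}$ with $R/\ann_R(g)$ of this $h$-vector, realizing $Q(0)$. It remains to adjoin lower-degree terms, writing $f = g + c_3 + c_2$, where $c_3$ is a cubic in $r_1$ fresh variables and $c_2$ a quadric in $t_1$ further fresh variables, so that the new variables contribute $r_1$ (resp. $t_1$) independent directions in degree~$1$; the point is that they must be coupled to the $x$'s, since a pure term such as $y^3$ would place socle in degree~$3$ and destroy levelness, so $c_3$ has to be of mixed type pushing the $y$-socle up to degree~$4$. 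One then computes the graded pieces $(I^\perp)_i$ via \eqref{H2} and checks that $Q(1)$ and $Q(2)$ acquire exactly $(0,r_1,r_1,0)$ and $(0,t_1,0)$ without perturbing $Q(0)$.

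The main obstacle is this last step: exhibiting the mixed lower-order terms explicitly and verifying that the resulting inverse system has the predicted symmetric decomposition, i.e.\ that the fresh variables neither create spurious socle in degrees $< 4$ nor inflate the middle Hilbert value beyond $h_2$. The bookkeeping of part~\eqref{thm:NecConForGor} only certifies that the target is numerically consistent; turning that into an actual family of polynomials — the effective construction promised in the introduction — is where the real work lies.
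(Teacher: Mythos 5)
Part~(1) of your proposal is correct and is essentially the paper's own argument: the symmetric $Q$-decomposition, the observation that $Q(a)_0=0$ for $a\ge 1$, the identification $h_{s-1}=\dim_K Q(0)_{s-1}=\dim_K Q(0)_1$, Macaulay's bound $\dim_K Q(0)_2\le\binom{h_{s-1}+1}{2}$, and $\dim_K Q(1)_1\le h_1-h_{s-1}$. No issues there.

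Part~(2) has a genuine gap, and the one concrete instruction you give for closing it points in the wrong direction. You correctly prescribe the target decomposition $Q(0)=(1,h_3,q_2,h_3,1)$, $Q(1)=(0,r_1,r_1,0)$, $Q(2)=(0,t_1,0)$ and check the numerics, but you stop at ``exhibiting the mixed lower-order terms \dots is where the real work lies,'' so the construction is never carried out. Worse, the guiding principle you state --- that a pure term such as $y^3$ in a fresh variable ``would place socle in degree $3$ and destroy levelness,'' so the cubic part must be of mixed type --- is false here. The inverse system is cyclic, generated by a single polynomial $f$ of degree $4$, so by Matlis duality $R/\ann_R(f)$ is automatically Gorenstein of socle degree $4$ whatever lower-degree terms appear in $f$. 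A pure cube $y^3$ in a fresh variable contributes exactly $(0,1,1,0)$ to the Hilbert function (the classes of $y$ and $y^2$ in $(I^{\perp})_1$ and $(I^{\perp})_2$; nothing in degree $3$, since $y^3\notin\langle f\rangle_R$), and a pure square $z^2$ contributes $(0,1,0)$ --- precisely the $Q(1)$ and $Q(2)$ shapes you want. This is exactly what the paper does: for $h_2\le h_1$ it takes $f=x_1^4+\cdots+x_{h_3}^4+x_{h_3+1}^3+\cdots+x_{h_2}^3+x_{h_2+1}^2+\cdots+x_{h_1}^2$, and for $h_2>h_1$ it appends the pure tail $x_{n+1}^3+\cdots+x_{h_1}^3$ to an explicit quartic $\sum_i x_i^2g_i+\cdots$ in the first $n=h_3$ variables whose second partials span a space of dimension $h_2-h_1+h_3$. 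So the missing step becomes routine once the ``mixed type'' requirement is dropped, but as written your proof is incomplete, and the completion you sketch would have you solving a harder problem than necessary. A smaller point: you realize $Q(0)$ by appealing to the theorem that SI-sequences are graded Gorenstein sequences; that is legitimate but is an external result the paper avoids by writing down the quartic explicitly, which is also what makes its proof effective.
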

\begin{proof}
(\ref{thm:NecConForGor}): From \eqref{eqn:NaturalBound} it follows that $ h_{s-1} \leq h_1.$ 
Let $A$ be a Gorenstein local $K$-algebra with the Hilbert function $h$ and $\{Q(a):a=0,\ldots,s-1\}$ be a $Q$-decomposition of $gr_\m(A).$ Since $Q(i)_{s-1}=0$ for $i>0,$ 
$\dim_K Q(0)_{s-1}=h_{s-1}.$ Hence $\dim_K Q(0)_1=\dim_K Q(0)_{s-1}=h_{s-1}$ and $\dim_K Q(0)_{s-2}=\dim_K Q(0)_2 \leq h_{s-1}^{\langle 1 \rangle}.$ This in turn implies that $\dim_K Q(1)_{s-2}=\dim_K Q(1)_1 \leq h_1 - h_{s-1}.$ Therefore
\begin{eqnarray*}
 h_{s-2}&=& \dim_K Q(0)_{s-2}+\dim_K Q(1)_{s-2}\\
 &\leq& \binom{h_{s-1}+1}{2}+(h_1-h_{s-1}).
\end{eqnarray*}

\eqref{thm:SuffFors=4}: Suppose $h_2\leq h_1.$ Set
  $$
    f=  x_1^4+\cdots+x_{h_3}^4+x_{h_3+1}^3+\cdots+x_{h_2}^3+x_{h_2+1}^2+\cdots+x_{h_1}^2. 
  $$
Then $A_f$ has the Hilbert function $h.$ Now assume $h_2 >h_1.$ Denote  $h_3:=n $ and define monomials $g_i \in K[x_1,\ldots,x_n]$ as follows:
\begin{eqnarray*}
g_i=\begin{cases}
x_i^2 & \mbox{ if } 1 \leq i \leq n\\
x_{i-n}x_{i-n+1} & \mbox{ if } n+1 \leq i \leq 2n-1\\
x_nx_1 & \mbox{ if }i=2n.
\end{cases}
\end{eqnarray*}
For $\ui=(i_1,\ldots,i_n) \in \mathbb{N}^n,$ let $x^{\ui}:=x_1^{i_1} \ldots x_n^{i_n}.$
Let $T$ be the set of monomials $x^{\ui} $ of degree $2$ in $K[x_1,\ldots,x_n]$ such that $$x^{\ui} \notin \{g_i:1\leq i \leq 2n\}. 
$$ Then $|T|=\binom{n+1}{2}-2n.$ We write 
$T=\{g_i: 2n < i \leq \binom{n+1}{2}\}.$ 
Define 
\begin{eqnarray*}
 f=   \begin{cases}\displaystyle
 \sum_{i=1}^{n}x_i^2g_{i}+\sum_{i=1}^{h_2-h_1} x_i^2 g_{n+i} + x_{n+1}^3+\cdots+x_{h_1}^3 & \mbox{ if }h_2 - h_1 \leq n\\
 \displaystyle \sum_{i=1}^{n}x_i^2g_{i} + \sum_{i=1}^{n} x_i^2 g_{n+i} +
 \sum_{2n+1}^{h_2-h_1+n} g_i^2+ x_{n+1}^3+\cdots+x_{h_1}^3 & \mbox{ if }h_2-h_1>n.       
      \end{cases}
 \end{eqnarray*}
Then $h_3=\dim_K\left(\frac{\partial f}{\partial x_1}, \ldots, \frac{\partial f}{\partial x_n}\right)=n$ 
and
\begin{eqnarray*}
h_2=\dim_K \left(\{g_i: 1 \leq i \leq h_2-h_1+n\} \Vx \{x_{n+1}^2,\ldots,x_{h_1}^2\}\right).
\end{eqnarray*}
Thus $A_f$ has the Hilbert function $(1,h_1,h_2,h_3,1).$
\end{proof}
\vskip 2mm

\begin{remark}
\label{remark:gradedGoreCase}
 If $h_3=h_1,$ then $f$ in the proof of Theorem \ref{thm:ConForGor}\eqref{thm:SuffFors=4} is homogeneous and hence $A_f$ is a graded Gorenstein $K$-algebra.
\end{remark}

If the socle degree is $4 $ and $h_1 \le 12, $ then the converse holds in  (\ref{thm:NecConForGor}).

\begin{corollary}
\label{Cor:ConForGor}
 An O-sequence $h=(1, h_1, h_2, h_3, 1)$ with $h_1\leq 12$ is a   Gorenstein O-sequence if and only if $h$ satisfies \eqref{eqn:NecCond}.
\end{corollary}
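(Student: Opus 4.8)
The plan is to reduce the statement to Theorem~\ref{thm:ConForGor} and to dispatch the one case that theorem leaves open. The necessity direction is immediate: specializing Theorem~\ref{thm:ConForGor}\eqref{thm:NecConForGor} to $s=4$ (so that $h_{s-1}=h_3$ and $h_{s-2}=h_2$) shows that any Gorenstein O-sequence $(1,h_1,h_2,h_3,1)$ satisfies \eqref{eqn:NecCond}, with no restriction on $h_1$. For sufficiency, suppose $h=(1,h_1,h_2,h_3,1)$ is an O-sequence with $h_1\le 12$ obeying \eqref{eqn:NecCond}. If $h_2\ge h_3$, then Theorem~\ref{thm:ConForGor}\eqref{thm:SuffFors=4} already exhibits a Gorenstein $K$-algebra with Hilbert function $h$, so the entire content of the corollary lies in the complementary case $h_2<h_3$. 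I note that in this case \eqref{eqn:NecCond} reduces to the single inequality $h_3\le h_1$, since $\binom{h_3+1}{2}\ge h_3>h_2$ makes the second inequality automatic.

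For the case $h_2<h_3$, the plan is to build the dual generator explicitly. Write $r=h_3$ and $m=h_2$, and suppose for the moment that $(1,r,m,r,1)$ is realised by a homogeneous form $g\in K[x_1,\dots,x_r]$ of degree $4$, i.e. $R/\ann_R(g)$ has Hilbert function $(1,r,m,r,1)$. I then set
\[
f = g + x_{r+1}^2 + \cdots + x_{h_1}^2 \in P,
\]
and claim that $A_f=R/\ann_R(f)$ has Hilbert function $h$. Being a principal inverse system, $\langle f\rangle_R$ forces $A_f$ to be Gorenstein, so only the Hilbert function must be checked, which I will do by tracking leading forms. The top-degree part of $f$ is $g$, so $Q(0)\cong R/\ann_R(g)$ contributes $(1,r,m,r,1)$; the pure squares $x_j^2$ have degree $2$ but are dominated by $g$ and never survive as degree-$2$ leading forms, so $h_2(A_f)=m$ and $h_3(A_f)=r$ are unaffected, while the first-order derivatives $\partial_{x_j}\circ f = 2x_j$ contribute exactly $h_1-r$ new independent linear forms. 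Hence $h_1(A_f)=r+(h_1-r)=h_1$ and $A_f$ has Hilbert function $(1,h_1,m,r,1)=h$.

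This reduces the corollary to the single question of producing the homogeneous form $g$, that is, to showing that $(1,r,m,r,1)$ is a graded Gorenstein O-sequence. The sequence $(1,r,m,r,1)$ is certainly a symmetric O-sequence: the step $r\to m$ is legal because $m<r\le\binom{r+1}{2}$, and the step $m\to r$ is exactly the Macaulay inequality $h_3\le h_2^{\langle 2\rangle}$ guaranteed by the hypothesis that $h$ is an O-sequence. The plan is then to invoke the known characterization of graded Gorenstein O-sequences of socle degree $4$, which shows that every symmetric O-sequence $(1,r,m,r,1)$ with $r\le 12$ is Gorenstein; this provides $g$ and finishes the argument.

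The hard part, and the sole place where the hypothesis $h_1\le 12$ enters, is this last step. When $h_2<h_3$ the graded model $(1,h_3,h_2,h_3,1)$ is non-unimodal, and being a symmetric O-sequence does not by itself guarantee that it is Gorenstein. The bound $r=h_3\le h_1\le 12$ is precisely the range in which symmetric socle-degree-$4$ O-sequences are forced to be Gorenstein, and it is sharp: in embedding dimension $13$ there are symmetric O-sequences of the form $(1,13,m,13,1)$ with $m$ below Stanley's threshold that satisfy \eqref{eqn:NecCond} yet fail to be Gorenstein, so $h_1\le 12$ cannot be dropped.
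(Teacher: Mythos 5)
Your treatment of the necessity direction and of the case $h_2\ge h_3$ is fine and agrees with the paper: both reduce directly to Theorem~\ref{thm:ConForGor}. The problem is the case $h_2<h_3$, which you correctly identify as the only remaining content but then resolve with a claim that is the exact opposite of the truth. You assert that every symmetric O-sequence $(1,r,m,r,1)$ with $r\le 12$ is a graded Gorenstein O-sequence, and that embedding dimension $13$ is where symmetric sequences first fail to be Gorenstein. The theorem of Migliore and Zanello that the paper relies on (\cite[Theorem 3.2]{mz}) says precisely the reverse: every \emph{graded Gorenstein} $h$-vector with $h_1\le 12$ is unimodal, so a non-unimodal symmetric O-sequence $(1,r,m,r,1)$ with $m<r\le 12$ is \emph{never} graded Gorenstein, and Stanley's $(1,13,12,13,1)$ is the first non-unimodal sequence that \emph{is} Gorenstein. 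Consequently the homogeneous form $g$ your argument requires does not exist, and the construction $f=g+x_{r+1}^2+\cdots+x_{h_1}^2$ never gets off the ground.

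The failure is not merely one of this particular construction: the object you are trying to build provably cannot exist. By Iarrobino's symmetric $Q$-decomposition --- which is exactly the paper's argument --- any local Gorenstein $K$-algebra with Hilbert function $(1,h_1,h_2,h_3,1)$ has a graded Gorenstein quotient $Q(0)$ with Hilbert function $(1,h_3,h_2-m,h_3,1)$ for some $m\ge 0$, and Migliore--Zanello then forces $h_2\ge h_2-m\ge h_3$ once $h_3\le h_1\le 12$. The paper therefore disposes of the case $h_2<h_3$ by showing it is \emph{vacuous} on the Gorenstein side, not by constructing anything; the only content of the corollary beyond Theorem~\ref{thm:ConForGor} is this non-existence statement. (A side remark: this also shows that the ``if'' direction must be read together with $h_2\ge h_3$ --- an O-sequence such as $(1,12,11,12,1)$ satisfies \eqref{eqn:NecCond} but, by the very argument just given, is not a Gorenstein O-sequence. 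So your instinct that the case $h_2<h_3$ is where all the difficulty lies was sound, but the resolution goes in the opposite direction from the one you propose.)
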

\begin{proof}
 By Theorem \ref{thm:ConForGor}, it suffices to show that $h_2 \geq h_3$ if $h_1\leq 12.$ Let $A$ be a local Gorenstein $K$-algebra with the Hilbert function $h.$ Considering the symmetric $Q$-decomposition of $gr_{\m}(A),$ we observe that in this case $Q(0)$ has the Hilbert function $(1, h_3,h_2-m, h_3,1),$ for some non-negative integer $m.$  
 Since $h_1\leq 12,$ by \eqref{eqn:NecCond} $h_3 \leq 12. $ As $Q(0)$ is a graded Gorenstein $K$-algebra, by \cite[Theorem 3.2]{mz} we conclude that $h_2-m \geq h_3 $ since $Q(0) $ has unimodal Hilbert function, hence $h_2 \ge h_3.$  
\end{proof}

\begin{remark}
\label{remark:UnimodalGore}
\label{remark:NonuniGore}
A   Gorenstein O-sequence $(1, h_1, h_2, h_3,1)$ does not necessarily satisfy $h_2  \geq h_3.$ For example, consider the the sequence $h=(1,13,12,13,1).$ By \cite[Example 4.3]{stanley} there exists a graded Gorenstein $K$-algebra with $h$ as $h$-vector. 
\end{remark}

In the following theorem we  characterize  the $h$-vector of local level algebras of socle degree $4 $  and embedding dimension $3.$ 

\begin{theorem}
\label{thm:s=4}
Let $h=(1,3,h_2, h_3, h_4)$ be an O-sequence.  
\begin{enumerate}
 \item \label{thm:GoreCase} Let $h_4 =1.$ Then $h$ is a   Gorenstein O-sequence if and only if $h_3 \leq 3$ and $h_2 \leq \binom{h_3+1}{2}+(3-h_3).$
 \item \label{thm:levelCase} Let $h_4 \geq 2$. Then $h$ is a   level O-sequence if and only if $h_3 \leq 3 h_4.$
\end{enumerate}
 \end{theorem}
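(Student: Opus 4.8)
The plan is to treat the two parts separately, reducing the Gorenstein case to the machinery already in place and reserving the real work for the level case. For part \eqref{thm:GoreCase}, since $h_1=3\le 12$, the assertion is precisely the specialization of Corollary \ref{Cor:ConForGor} to $h_1=3$: an O-sequence $(1,3,h_2,h_3,1)$ is a Gorenstein O-sequence if and only if it satisfies \eqref{eqn:NecCond}, which here reads $h_3\le 3$ and $h_2\le\binom{h_3+1}{2}+(3-h_3)$. Thus I would simply invoke Theorem \ref{thm:ConForGor} together with Corollary \ref{Cor:ConForGor}, with no new argument needed.

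For part \eqref{thm:levelCase}, necessity is immediate from \eqref{eqn:NaturalBound}: with $s=4$, $h_1=3$, and socle type $e_4=h_4$, $e_i=0$ otherwise, the bound gives $h_3\le h_1h_4=3h_4$. The substance is the converse, which I would prove by an explicit construction through Macaulay's inverse system. By the one-to-one correspondence recalled in Section \ref{section:preliminaries}, it suffices to produce $h_4$ polynomials $f_1,\dots,f_{h_4}\in P=K[x_1,x_2,x_3]$ of degree $4$ with linearly independent leading forms $f_j[4]$ whose associated module $M=\langle f_1,\dots,f_{h_4}\rangle_R$ has Hilbert function $(1,3,h_2,h_3,h_4)$ as computed by \eqref{H2}.

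The first thing I would record, working degree by degree in \eqref{H2}, is that the top two entries depend only on the leading forms. Since the $f_j[4]$ are linearly independent, $\dim_K(M)_4=h_4$; and an element of $M$ of degree $\le 3$ must have vanishing degree-$4$ part, which forces $(M)_3=\mathrm{span}\{\ell\circ f_j[4]:\ell\ \text{linear},\ 1\le j\le h_4\}$, so that $h_3$ is exactly the dimension of the span of the first partials of the leading forms, with no contribution from the terms $f_j[3]$. This reduces the problem to two tasks: (i) choose leading forms $f_1[4],\dots,f_{h_4}[4]$ that are linearly independent and whose first partials span a space of dimension exactly $h_3$ — possible for every $h_3\le\min\{10,3h_4\}$ by degenerating generic quartics to suitable monomial families; and (ii) add homogeneous parts $f_j[3],f_j[2],\dots$ of lower degree, which alter neither $h_3$ nor $h_4$, so as to raise $\dim_K(M)_2$ to the prescribed value $h_2$ while keeping $\dim_K(M)_1=3$.

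The main obstacle is task (ii). A parallel computation shows that $(M)_2$ receives a fixed contribution from the second partials of the leading forms, namely $\mathrm{span}\{m\circ f_j[4]\}$ over degree-$2$ forms $m$, together with an extra contribution of the form $\sum_j\ell_j\circ f_j[3]$ that becomes available precisely when the first-order relation $\sum_j\ell_j\circ f_j[4]=0$ holds. I would therefore organize the argument as a case analysis on the value of $h_2$ inside the range forced by the O-sequence hypothesis (in particular $h_2\le 6$ and $h_3\le h_2^{\langle 2\rangle}$), exhibiting in each case explicit lower-order terms — built from the same monomial families used in the proof of Theorem \ref{thm:ConForGor}\eqref{thm:SuffFors=4} — that realize exactly $h_2$, and then checking $h_1=3$ by ensuring all three variables occur. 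Verifying that these explicit families cover every admissible triple $(h_2,h_3,h_4)$ and that the derivative-span dimensions come out \emph{exactly}, rather than merely up to inequality, is the delicate bookkeeping at the heart of the proof.
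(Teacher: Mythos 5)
Your treatment of part (\ref{thm:GoreCase}) and of the ``only if'' direction of part (\ref{thm:levelCase}) coincides with the paper's: the former is exactly Corollary \ref{Cor:ConForGor} specialized to $h_1=3$, and the latter is the bound \eqref{eqn:NaturalBound}. Your structural analysis of the inverse system is also correct and matches the mechanism the paper exploits: with linearly independent leading forms $f_j[4]$ one gets $\dim_K(M)_4=h_4$, the component $(M)_3$ is spanned by the first partials of the $f_j[4]$ alone, and $(M)_2$ is the span of the second partials of the $f_j[4]$ together with the elements $\sum_j\ell_j\circ f_j[3]$ for those tuples of linear forms satisfying $\sum_j\ell_j\circ f_j[4]=0$.

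The gap is that the existence claims in your steps (i) and (ii) are asserted rather than proved, and they are not routine. Step (i) --- that for every admissible pair one can find $h_4$ linearly independent quartics in three variables whose first partials span a space of dimension exactly $h_3$ --- is delicate when $h_3<h_4$ (e.g.\ the O-sequence $(1,3,6,4,5)$ requires five independent quartics with only a four-dimensional space of first partials) and is not a genericity statement, since ``degenerating generic quartics'' can overshoot or undershoot the target dimension. Step (ii) is worse: when $h_3=3h_4$ the map $(\ell_j)\mapsto\sum_j\ell_j\circ f_j[4]$ is injective, there are no syzygies, the tails $f_j[3]$ contribute nothing to $(M)_2$, and $h_2$ is entirely determined by the second partials of the leading forms; so your division of labor (leading forms fix $h_3,h_4$, then tails adjust $h_2$) breaks down, and the leading forms must be co-designed with the target $h_2$. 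The paper's proof of part (\ref{thm:levelCase}) consists precisely of supplying these explicit families: an induction on $h_4$ through the cases $h_4=2,3,4$ and $h_4\ge 5$, with further subcases on $(h_2,h_3)$, each giving concrete monomial generators and a verification that the derivative spans have the exact dimensions claimed. Until you exhibit such families covering every O-sequence $(1,3,h_2,h_3,h_4)$ with $h_3\le 3h_4$, the sufficiency direction is not established.
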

\begin{proof}
\eqref{thm:GoreCase}: 
Follows from Corollary \ref{Cor:ConForGor}.

\noindent \eqref{thm:levelCase}: 
The ``only if'' part follows from \eqref{eqn:NaturalBound}. The converse is constructive and we prove it   by inductive steps on $h_4.$  First we consider the cases $h_4=2,3,4$ and then $h_4 \geq 5.$ In each case we define the polynomials $\underbar{f}:=f_1,\ldots,f_{h_4} \in P= K[x_1,x_2,x_3] $ of degree $4$ such that $A_{\underbar{f}}$ has the Hilbert function $h.$ We set $g_1'=x_3^3,g_2'=x_2^2x_3,g_3'=x_1^2x_2,g_4'=x_1x_3^2.$
\vskip 2mm
\noindent 
For short, in this proof we use the following notation: $m:=h_2$ and $n:=h_3.$

\begin{center}
\noindent{\underline{{\bf Case 1: $ h_4=2.$}}}
 \end{center}
In this case $n \leq 6$ as $h_3 \leq 3 h_4$ by assumption. Suppose $m=2.$ Then $h$ is an O-sequence implies that $n=2.$ In this case, let $f_1=x_1^4+x_3^2$ and $f_2=x_2^4.$ Then $A_{\underbar{f}}$ has the Hilbert function $(1,3,2,2,2).$ Now assume that $m \geq 3.$  \\
{\underline{{\bf Subase 1: $m \geq n.$}}} 
We set $g_i=\begin{cases}
          x_{4-i}g_i' & \mbox{ if } 1 \leq i \leq n-2\\
          g_i' & \mbox{ if }n-1 \leq i \leq m-2\\
          0   & \mbox{ if } m-1 \leq i \leq 4.
          \end{cases}$      
              
\noindent (Here $x_0=x_3$). Define
\begin{equation*}
f_1=x_1^4+g_1+g_2 \mbox{ and }f_2=x_2^4+g_3+g_4. 
\end{equation*}
Then 
\begin{eqnarray*}
 h_3&=&\dim_K\{x_1^3,x_2^3,g_1',\ldots,g_{n-2}'\}=n \mbox{ and }\\
 h_2&=&\dim_K\{x_1^2,x_2^2,\frac{g_i'}{x_{4-i}}:1\leq i \leq m-2\}=m
\end{eqnarray*}
and hence $A_{\underbar{f}}$ has the required Hilbert function $h.$\\
{\underline{{\bf Subcase 2: $m <n.$}}} 
The only possible ordered tuples $(m,n)$ with $m<n \leq 6$ such that $h$ is an O-sequence are $\{(3,4),(4,5),(5,6)\}.$ For each $2$-tuple $(m,n)$ we define $f_1,f_2$ as:\\
$a. (m,n)=(3,4):$ $f_1=x_1^4+x_1^2x_2^2+x_3^2;f_2=x_2^4+x_1^2x_2^2.$ \\
$b. (m,n)=(4,5):$ $f_1=x_1^4+x_1^2x_2^2+x_3^4;f_2=x_2^4+x_1^2x_2^2.$\\
$c. (m,n)=(5,6):$ $f_1=x_1^4+x_1^2x_2^2+x_3^4;f_2=x_2^4+x_1^2x_2^2+x_2^3x_3.$
\begin{center}
\noindent{\underline{{\bf Case 2: $ h_4=3.$}}} 
\end{center}
In this case $n \leq 9.$ 
We consider the following subcases:\\
{\underline{{\bf Subcase 1: $n \leq 6.$}}} Let $\underbar{f}^\prime=f_1,f_2$ be polynomials defined as in Case 1 such that $A_{\underbar{f}^\prime}$ has the Hilbert function $(1,3,m,n,2).$ Now define 
$f_3=\begin{cases}
      x_3^4 &\mbox{ if } m\geq n\\
      x_1^2x_2^2 &\mbox{ if }m<n.     
     \end{cases}
$\\
Then $A_{\underbar{f}}$ has the required Hilbert function $h.$\\
{\underline{{\bf Subcase 2: $7 \leq n \leq 9.$}}} Let $\underbar{f}^\prime=f_1,f_2$ be polynomials defined as in Case 1 such that $A_{\underbar{f}^\prime}$ has the Hilbert function $(1,3,m,6,2).$ 
We set $p_1=x_2^2x_3^2,p_2=x_1^2x_2^2$ and $p_3=x_1^2x_3^2.$ 
Since $h$ is an O-sequence and $n\geq 7$, we get $ m\geq 5.$ 
Now define 
$
f_3=\begin{cases}
     \sum_{i=1}^{n-6} p_i &\mbox{ if } m=6\\
     x_2^2x_3^2 & \mbox{ if }m=5.
     \end{cases}
$\\
Then $A_{\underbar{f}}$ has the required Hilbert function $h.$
 \begin{center}
\noindent{\underline{{\bf Case 3: $h_4 =4.$}}} 
\end{center}
Since $h$ is an O-sequence, $n \leq 10.$ 
We consider the following subcases:\\
\noindent {\underline{{\bf Subcase 1: $n \leq 9.$}}} Let $\underbar{f}^\prime=f_1,f_2,f_3$ be polynomials defined as in Case 2 such that $A_{\underbar{f}^\prime}$ has the Hilbert function $(1,3,m,n,3).$ Define
$$
f_4=\begin{cases}
     x_2^3x_3  &  \mbox{ if } \{m \geq n \mbox{ and } n \leq 6\} \mbox{ OR } \{ n \geq 7 \mbox{ and }m=6\}\\
     x_1^3x_2  & \mbox{ if }\{m<n \leq 6\} \mbox{ OR }\{(m,n)=(5,7)\}.
     \end{cases}
$$
Then $A_{\underbar{f}}$ has the Hilbert function $(1,3,m,n,4).$\\
{\underline{{\bf Subcase 2: $n=10.$}}} As $h$ is an O-sequence, we conclude that $m=6.$ Let $\underbar{f}^\prime=f_1,f_2,f_3$ be polynomials defined as in Case 2 such that $A_{\underbar{f}^\prime}$ has the Hilbert function $(1,3,6,9,3).$ Define $f_4=x_1^2x_2x_3.$
Then $A_{\underbar{f}}$ has the Hilbert function $(1,3,6,10,4).$

\begin{center}
\noindent{\underline{{\bf Case 4: $ h_4 \geq 5.$}}} 
\end{center}
Since $h$ is an O-sequence, $n \leq 10$ and $h_4 \leq 15$ \\
{\underline{{\bf Subcase 1: $n \geq h_4$ OR $h_4 \geq 11.$ }}}
Let $\underbar{f}^\prime=f_1,f_2,f_3,f_4$ be defined as in Case 3 such that  $ A_{\underbar{f}^\prime} $ has the Hilbert function  $(1,3,m,n,4).$ For $5 \leq i \leq 15,$ define $f_i$ as follows:
\begin{eqnarray*}
f_5&=&\begin{cases}
     x_1^3x_2  &  \mbox{ if } \{m \geq n \mbox{ and } n \leq 6\} \mbox{ OR }  \{n \geq 7 \mbox{ and }m=6\}\\
     x_1x_2^3  & \mbox{ if }\{m<n \leq 6\} \mbox{ OR }\{(m,n)=(5,7)\},
     \end{cases} \\
f_6&=&\begin{cases}
     x_1x_3^3  &  \mbox{ if } \{m \geq n \mbox{ and } n \leq 6\} \mbox{ OR }  \{n \geq 7 \mbox{ and }m=6\}\\
     x_3^4  & \mbox{ if }\{m<n \leq 6 \}\mbox{ OR }\{(m,n)=(5,7)\},
     \end{cases} \\
f_7&=&\begin{cases}
     x_3^4  &  \mbox{ if }  n \geq 7 \mbox{ and }m=6\\
     x_2^3x_3  & \mbox{ if }\{(m,n)=(5,7)\}.
     \end{cases} 
\end{eqnarray*}
(Note that in the last case, $h_4 \geq 7$ implies that $n \geq 7$). 
If $h_4 \geq 8,$ then $n \geq 8$ which implies that $m=6.$ We set
$$f_8=x_1^2x_2^2,f_9=x_1^2x_3^2,f_{10}=x_2x_3^3,f_{11}=x_1x_2^3,f_{12}=x_1^3x_3,f_{13}=x_2^3x_3,f_{14}=x_1x_2^2x_3,f_{15}=x_1x_2x_3^2.$$ Now $A_{\underbar{f}}$ has the Hilbert function $(1,3,m,n,h_4).$

\noindent {\underline{{\bf Subcase 2: $n < h_4 \leq 10.$ }}} The smallest ordered tuple $(n, h_4)$ such that $h$ is an O-sequence and $n<h_4$ is $(4,5).$ (Here smallest ordered tuple means smallest with respect to the order $\leq$ defined as: $(n_1,n_2) \leq (m_1,m_2)$ if and only if $n_1\leq m_1$ and $n_2\leq m_2 $). Let  
\begin{eqnarray*}
q_1=\begin{cases}
          x_3^2 & \mbox{ if } m=3\\
          x_3^3 & \mbox{ if } m \geq 4,
          \end{cases}
q_2=\begin{cases}
          0   & \mbox{ if } m< 5\\
          x_2^2x_3 & \mbox{ if } m\geq 5
     \end{cases} 
\mbox{ and }     
q_3=\begin{cases}
          0   & \mbox{ if } m< 6\\
          x_1x_3^2 & \mbox{ if } m= 6.
     \end{cases} 
     \end{eqnarray*}
We define 
\begin{equation*}
\label{eqn:basecase}
f_1=x_1^4+q_1+q_2,f_2=x_2^4+q_3,f_3=x_1^3x_2,f_4=x_1x_2^3,f_5=x_1^2x_2^2.
\end{equation*}
Then $A_{\underbar{f}}$ has the Hilbert function $(1,3,m,4,5).$

\noindent Let $h_4 \geq 6.$ We set 
$$f_6=x_3^4,f_7=x_2^3x_3,f_8=x_2x_3^3,f_9=\begin{cases}
                                                                    x_2^2x_3^2 &\mbox{ if }n=7\\
                                                                    x_1x_3^3 &\mbox{ if }n\geq8,
                                                                   \end{cases}
f_{10}=\begin{cases}
         x_2^2x_3^2 &\mbox{ if }n=8\\
         x_1^3x_3 &\mbox{ if } n=9.     
         \end{cases}
$$ 
Then $A_{\underbar{f}}$ has the Hilbert function $(1,3,m,n,h_4).$
\end{proof}

Using \cite[Appendix D]{ghms} and Theorem \ref{thm:s=4}\eqref{thm:levelCase} we list in the following table all the O-sequences which are realizable for local level $K$-algebras with $h_4 \ge 2$, but not for graded level $K$-algebras.

\begin{table}[h]
\caption{}
\label{table:locallevelSeq}
\begin{center}
 \begin{tabular}{|c|c|c|c|c|c|}
\hline
$(1,3,2,2,2)$ & $(1,3,3,2,2)$ &$(1,3,4,2,2)$ & $(1,3,5,2,2)$ & $(1,3,6,2,2)$ & $(1,3,5,3,2)$\\
\hline
$(1,3,6,3,2)$ &$(1,3,3,4,2)$ & $(1,3,4,3,3)$ & $(1,3,5,3,3)$ &$(1,3,6,3,3)$& $(1,3,3,4,3)$ \\ 
\hline
 $(1,3,6,4,3)$ & $(1,3,3,4,4)$ & $(1,3,5,4,4)$ & $(1,3,6,4,4)$ &$(1,3,3,4,5)$ & $(1,3,4,4,5)$\\
\hline
$(1,3,5,4,5)$&$(1,3,6,4,5)$& $(1,3,6,5,5)$& $(1,3,5,5,6)$ & $(1,3,6,5,6)$ & $(1,3,6,6,7)$\\
\hline
$(1,3,6,7,9)$&  & &  &  & \\
\hline
\end{tabular}
\end{center}
\end{table}

Analogously,  by Theorem \ref{thm:s=4}\eqref{thm:GoreCase}, the following O-sequences are   Gorenstein sequences, but they are not graded Gorenstein sequences  since they are not symmetric. If an O-sequence $h$ is symmetric with $h_1=3$ then $h$ is also a graded Gorenstein O-sequence by Remark \ref{remark:gradedGoreCase}. 
\begin{table}[h]
\caption{}
\label{table:localGorSeq}
\begin{center}
 \begin{tabular}{|c|c|c|c|c|c|}
 \hline
  $(1,3,1,1,1)$ & $(1,3,2,1,1)$ & $(1,3,3,1,1)$ & $(1,3,2,2,1)$ & $(1,3,3,2,1)$ & $(1,3,4,2,1)$\\
\hline
 \end{tabular}
 \end{center}
 \end{table}

The following example shows  that   Theorem \ref{thm:s=4}\eqref{thm:levelCase} can not be extended to $h_1 \ge 4 $ because the necessary condition $h_3 \le h_1 h_s $ is not longer sufficient for characterizing level O-sequences of  socle degree $4. $ 

\begin{example}
\label{Example:h1=4}
 The O-sequence $h=(1,4,9,2,2)$ is not a   level O-sequence.
\end{example}
\begin{proof}
Let $A=R/I$ be a local level $K$-algebra with the Hilbert function $h.$ The lex-ideal $L \in P=K[x_1,\dots, x_4] $ with the Hilbert function $h $ is $$L=(x_1^2,x_1x_2^2,x_1x_2x_3,x_1x_2x_4,x_1x_3^2,x_1x_3x_4,x_1x_4^2,x_2^3,x_2^2x_3,x_2^2x_4,x_2x_3^2,x_2x_3x_4,x_2x_4^2,x_3^3,x_3^2x_4,x_3x_4^4,x_4^5).$$ 
\noindent  A minimal graded $P$-free resolution of $P/L$ is:
{\small{  $$ 0  \to P(-6)^7 \oplus P(-8)^2   \to P(-5)^{26}\oplus  P(-7)^6   \to P(-4)^{33} \oplus P(-6)^6  \to P(-2) \oplus P(-3)^{14} \oplus P(-5)^2.$$ }}
\noindent By \cite[Theorem 4.1]{rs10} the Betti numbers of $A$ can be obtained from the Betti numbers of $P/L$ by a sequence of negative and zero consecutive cancellations. 
This implies that $\beta_4(A) \geq 3$ and hence $A$ has type at least $3,$ which leads to a contradiction. 
\end{proof} 
 
\section{Canonically graded algebras with $\m^5=0.$}
\label{section:CanGra}
It is clear that 
a necessary condition for a  Gorenstein local $K$-algebra $A$ being  canonically graded is that the Hilbert function of $A$ must be  symmetric. Hence we investigate whether a Gorenstein $K$-algebra $A$ with the Hilbert function $(1, h_1, h_2, h_1, 1)$ is necessarily canonically graded. If $h_2=\binom{h_1+1}{2}$ (equiv. $A$ is compressed), 
then by \cite[Theorem 3.1]{er15} $A$ is canonically graded. In this section we prove that if $h=(1, h_1, h_2, h_1, 1) $ is an O-sequence with $h_1 \le h_2< \binom{h_1+1}{2},$ then there exists a polynomial $F$ of degree $4$ such that $A_F$ has the  Hilbert function $h$ and it is {\it not} canonically graded. 

\begin{theorem}
\label{thm:CanGra}
Let $h=(1,h_1,h_2,h_3,1)$ be a    Gorenstein O-sequence with $h_2 \ge h_1$. Then every   local Gorenstein $K$-algebra with   Hilbert function $ h $  is necessarily canonically graded if and only if $h_1=h_3$ and $h_2 =\binom{h_1+1}{2}.$ 
\end{theorem}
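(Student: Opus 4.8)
The plan is to prove the two implications separately, using throughout that a canonically graded algebra has a symmetric Hilbert function, and reducing the substantive case to a computation with Macaulay's inverse system.

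\emph{Proof of ($\Leftarrow$).} Assume $h_1=h_3$ and $h_2=\binom{h_1+1}{2}$. Then $h=(1,h_1,\binom{h_1+1}{2},h_1,1)$ is the maximal Gorenstein $h$-vector in socle degree $4$ and embedding dimension $h_1$: indeed \eqref{eqn:NaturalBound} gives $h_2\le\dim_K R_2=\binom{h_1+1}{2}$ and $h_3\le h_1$, both attained here. Hence every local Gorenstein $K$-algebra with Hilbert function $h$ is compressed, so by \cite[Theorem 3.1]{er15} it is canonically graded. This direction is immediate.

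\emph{Proof of ($\Rightarrow$), non-symmetric case.} I argue the contrapositive. If $A$ is canonically graded then $A\cong gr_{\m}(A)$ is a graded Gorenstein algebra, so the Hilbert function of $A$ is symmetric; in socle degree $4$ symmetry means $h_1=h_3$. By Theorem \ref{thm:ConForGor}\eqref{thm:NecConForGor} one always has $h_3\le h_1$, so if the condition fails through $h_1\ne h_3$ then $h$ is not symmetric, and any local Gorenstein algebra realizing the O-sequence $h$ (one exists by hypothesis) cannot be canonically graded. It remains to treat $h_1=h_3=:n$ with $n\le h_2<\binom{n+1}{2}$.

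\emph{Proof of ($\Rightarrow$), symmetric non-maximal case.} I work in $P=K[x_1,\dots,x_n]$. Choose a homogeneous $F[4]\in P$ of degree $4$ with $A_{F[4]}=R/\ann_R(F[4])$ graded Gorenstein of Hilbert function $(1,n,h_2,n,1)$; such forms exist by Theorem \ref{thm:ConForGor}\eqref{thm:SuffFors=4}, which is homogeneous precisely because $h_1=h_3$ (Remark \ref{remark:gradedGoreCase}). For any $F[3]\in P_3$ set $F=F[4]+F[3]$. Then $A_F$ is Gorenstein of socle degree $4$, and because the first partials of $F[4]$ are linearly independent (as $h_3=n$), a direct check with \eqref{H2} shows that the leading forms of the derivatives of $F$ in each degree are exactly those of $F[4]$; hence $A_F$ has Hilbert function $h$ and $gr_{\m}(A_F)\cong A_{F[4]}$. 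Thus $A_F$ is canonically graded iff $F$ and $F[4]$ lie in one orbit of $\aut_K(R)$ acting on $P$. Writing $\phi\in\aut_K(R)$ as a sum of parts of orders $1,2,3,\dots$, its part of order $k$ acts on $P$ lowering degree by $k-1$; so in $\phi\cdot F=\lambda F[4]$ the degree-$4$ component forces the linear part of $\phi$ to fix $F[4]$ up to scalar, and the degree-$3$ component forces $F[3]$ (after the linear substitution, which leaves the subspace below invariant) into
\begin{equation*}
W(F[4]):=P_1\cdot U_2\subseteq P_3,\qquad U_2:=\langle q\circ F[4]:q\in R_2\rangle,\quad \dim_K U_2=h_2,
\end{equation*}
the contribution of the order-$2$ part of $\phi$. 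No higher-order part of $\phi$ affects degree $3$, so the condition $F[3]\in W(F[4])$ is exactly necessary for canonical gradedness. When $F[4]$ is compressed, $U_2=P_2$ and $W(F[4])=P_3$, recovering \cite[Theorem 3.1]{er15}.

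\emph{The main obstacle} is to choose $F[4]$ realizing $h$ for which $W(F[4])\subsetneq P_3$, and this is exactly where $h_2<\binom{n+1}{2}$ enters. By apolarity, $g\in R_3$ is orthogonal to $W(F[4])$ iff $\partial_{x_i}g\in\ann_R(F[4])_2$ for all $i$, and since $h_2<\binom{n+1}{2}$ forces $\ann_R(F[4])_2\ne0$, one can arrange a nonzero such $g$: for instance take $F[4]$ of degree $\le 1$ in one variable $x_j$ (tuning the remaining coefficients to realize the prescribed value $h_2$), so that $x_j^2\in\ann_R(F[4])_2$ and $g=x_j^3$ witnesses $W(F[4])\subsetneq P_3$. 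Choosing any $F[3]\in P_3\setminus W(F[4])$ then yields $F=F[4]+F[3]$ with $A_F$ of Hilbert function $h$ that is not canonically graded, completing the contrapositive. The two delicate points I would verify in detail are that a form of degree $\le 1$ in $x_j$ can still realize every admissible $n\le h_2<\binom{n+1}{2}$, and the apolarity identity $W(F[4])^{\perp}=\{g\in R_3:\partial_{x_i}g\in\ann_R(F[4])_2\ \forall i\}$ together with the stabilizer-invariance of $W(F[4])$; the pleasant feature is that, because the degree-$3$ equation decouples completely from the higher-order parts of $\phi$, no iterative obstruction analysis is needed.
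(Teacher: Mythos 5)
Your overall strategy is sound and, once unwound, it produces exactly the paper's counterexample: a homogeneous quartic $F[4]$ realizing $(1,n,h_2,n,1)$ with $x_j^2\in\ann_R(F[4])$, perturbed by $F[3]=x_j^3$. Your criterion via the correction space $W(F[4])=P_1\cdot(R_2\circ F[4])$ is correct in the direction you need (necessity), and the computation that $x_j^3\perp W(F[4])$ whenever $x_j^2\circ F[4]=0$ --- because the $x_j^2$-coefficient of $q\circ F[4]$ is $\langle q, x_j^2\circ F[4]\rangle=0$ --- is a clean, more conceptual repackaging of what the paper does by brute force: the paper instead assumes an isomorphism $\varphi:A_F\to A_G$ exists, deduces $\varphi(x_n)^2\circ G=0$, and kills the linear part of $\varphi(x_n)$ by comparing coefficients of carefully chosen quadratic monomials. (Two small points you should still write down: that the extra degree-$3$ ambiguity $R_1\circ F[4]$ is absorbed into $W(F[4])$ by Euler's relation, and that the $\mathrm{GL}$-stabilizer of $F[4]$ preserves $W(F[4])$; both are easy.)

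The genuine gap is the existence statement you explicitly defer: that for \emph{every} admissible pair $n\le h_2<\binom{n+1}{2}$ there is a homogeneous quartic of degree $\le 1$ in some variable $x_j$ whose apolar algebra has Hilbert function $(1,n,h_2,n,1)$. ``Tuning the remaining coefficients'' is not an argument: a generic form of degree $\le 1$ in $x_j$ gives $h_2=\binom{n+1}{2}-1$, and hitting every intermediate value of $h_2$ down to $n$ while keeping $h_3=n$ (all $n$ first partials independent) requires a construction. This is precisely where the paper's proof does its real work: it partitions the degree-$2$ monomials other than $x_n^2$ into explicit sets $E,B,C,D$, assigns to each a quartic monomial $g_i$ not divisible by $x_n^2$ so that $F=\sum_{i=1}^{h_2}g_i$ has $(\langle F\rangle_R)_2$ of dimension exactly $h_2$, and treats $n\le 3$ by separate explicit formulas (the general partition needs $n>3$). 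Until you supply this construction --- or some other existence proof covering all $h_2$ and the small embedding dimensions --- the proof is incomplete.
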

\begin{proof}
The assertion is clear for $h_1=1.$ Hence we assume $h_1 > 1.$ The ``if'' part of the theorem follows from \cite[Theorem 3.1]{er15}. We   prove the converse, that is we show  that     if $h_2 < \binom{h_1+1}{2},$ then there exists a polynomial $G$ of degree $4$ such that $A_G$ has the Hilbert function $h$ and it is not canonically graded. We may  assume $h_3=h_1,  $ otherwise the result is clear. For simplicity in the notation we put $h_1:=n $ and $h_2:=m.$

First we prove the assertion for $n \leq 3.$  
We define
\begin{eqnarray*}
 F=\begin{cases}
    x_1^3x_2 &\mbox{ if }n=m=2\\ 
    x_1^4+x_2^4+x_2^3x_3 &  \mbox{ if }n=3  \mbox{ and } m=3\\
    x_1^4+x_2^4+x_2^3x_3+x_1^3x_2 &  \mbox{ if }n=3  \mbox{ and } m=4\\
    x_1^4+x_2^4+x_2^3x_3+x_1^3x_2+x_1x_2^2x_3 &\mbox{ if }n=3 \mbox{ and }m=5.
    \end{cases}
\end{eqnarray*}
Let $G=F+x_{n}^3.$ It is easy to check that $A_G$ has the Hilbert function $h.$ We claim that $A_G$ is not canonically graded. Suppose that $A_G$ is canonically graded. Then $ A_F \cong gr_{\m}(A)  \cong  A_G.$ Let $\varphi:A_F \to A_G$ be a $K$-algebra automorphism. Since $x_n^2 \circ F=0,$ $x_n^2 \in \ann_R(F).$ This implies that $\varphi(x_n)^2 \in \ann_R(G)$ and hence $\varphi(x_n)^2 \circ G=0.$ For $\ui=(i_1,\ldots,i_n) \in \mathbb{N}^n,$ let $|\ui|=i_1+\cdots+i_n.$ Suppose 
\begin{eqnarray*}
 \varphi(x_n)=u_1x_1+\cdots+u_nx_n+\sum_{\ui \in \mathbb{N}^n,~|\ui|\geq 2} a_{\ui} x^{\ui}.
\end{eqnarray*}
Comparing the coefficients of the monomials of degree $\le 2$   in $\varphi(x_n)^2 \circ G=0,$ it is easy to verify that $u_1=\cdots=u_n=0.$ This implies that $\varphi(x_n)$ has no linear terms and thus $\varphi$ is not an automorphisms, a contradiction.

Suppose $n>3.$ First we define a homogeneous polynomial $F \in P$ of degree $4$ such that $A_F$ has the Hilbert function $h$ and $x_n^2$ does not divide any monomial in $F $ (in other words, if $x^{\ui}$ is a monomial that occurs in $F$ with nonzero coefficient, then $i_n \leq 1$). 

Let $T$ be a  monomials basis of $P_2. $ We split  the set $T\setminus \{x_n^2\}$ into a disjoint union of   monomials as follows. 
We set \begin{eqnarray*}
     p_i=\begin{cases}
      x_i^2 & \mbox{ for } 1 \leq i \leq n-1\\
      x_2x_n & \mbox{ for }i=n\\
      x_{i-n}x_{i+1-n} &\mbox{ for } n+1 \leq i < 2n \\
      x_1x_n & \mbox{ for }i=2n.
      \end{cases}
      \end{eqnarray*}
Let $E=\{p_i:1\leq i \leq n\}, B=\{p_i:n+1 \leq i \leq2n\}, C=\{x_ix_j:1\leq i <j < n \mbox{ such that } j-i>1\} $ and $D:=\{x_ix_n:3\leq i \leq n-2\}.$  Then 
$$T\setminus \{x_n^2\}=E\Vx B \Vx C \Vx D.$$
Denote by $ | \cdot | $ the cardinality, then  $|C|=\binom{n+1}{2}-2n-(n-4)-1$ and $|D|=n-4.$ Hence we write $C=\{p_i: 2n <i\leq \binom{n+1}{2}-(n-4)-1\} $ and $D=\{p_i:\binom{n+1}{2}-(n-4)-1<i \leq \binom{n+1}{2}-1\}.$
We set 
\begin{eqnarray*}
 g_i= \begin{cases}
         x_i^4 & \mbox{ for } 1 \leq i \leq n-1\\
         x_2^3x_n &\mbox{ for } i=n\\
         x_{i-n}^2p_i &\mbox{ for } n+1 \leq i <2n \mbox{ and } i \neq n+2 \\
         x_2^2x_3^2 &\mbox{ for } i=n+2\\
         x_1x_2^2x_n &\mbox{ for } i=2n\\
         p_i^2 &\mbox{ for } 2n <i \leq \binom{n+1}{2} - (n-4)-1\\
         \frac{x_2}{x_n} p_i^2 & \mbox{ for } \binom{n+1}{2} - (n-4) -1 <i <\binom{n+1}{2}.
        \end{cases}
\end{eqnarray*}
Define
\begin{eqnarray*}
 F=\sum_{i=1}^{m} g_i.
\end{eqnarray*}
Since $m \geq n,$ $\dim_K (\langle F\rangle_R)_i=n$ for $i=1,3.$ Also, $\dim_K(\langle F\rangle_R)_2=\dim_K\{p_i:1 \leq i \leq m\}=m.$ Hence $A_F$ has the Hilbert function $h.$  

Let $G=F+x_n^3.$ We prove that $A_G$ is not canonically graded. Suppose that $A_G$ is canonically graded. Then, as before,  $A_G \cong A_F. $ Let $\varphi:A_F \to A_G$ be a $K$-algebra automorphism. Since $F$ does not contain a monomial multiple of  $x_n^2,$ $x_n^2  \circ F=0$ and hence $x_n^2 \in \ann_R(F)$ which implies that $\varphi(x_n)^2 \in \ann_R(G).$ Let 
\begin{eqnarray*}
 \varphi(x_n)=u_{1}x_1+\cdots+u_{n}x_n+\sum_{{\ui} \in \mathbb{N}^n,~|\ui|\geq 2} a_{\ui} x^{\ui}.
 \end{eqnarray*}
We claim that $u_1=\cdots=u_{n-1}=0.$\\
\underline{{\bf Case 1:} $m=n.$} Comparing the coefficients of $x_1^2,x_2x_n,x_3^2,\ldots,x_{n-1}^2$ in $\varphi(x_n)^2 \circ G=0,$ we get $u_{1}=\cdots=u_{n-1}=0.$ \\
\underline{{\bf Case 2:} $m=n+1$ OR $m = n+2.$} Compare the coefficients of $x_1x_2,x_2x_n,x_3^2,\ldots,x_{n-1}^2$ in $\varphi(x_n)^2 \circ G=0,$ to get $u_1=\cdots=u_{n-1}=0.$ \\
\underline{{\bf Case 3:} $n+2<m < 2n.$} Comparing the coefficients of $x_1x_2, x_{2}x_n,x_3x_4, \ldots,x_{m-n}x_{m-n+1},x_{m-n+1}^2,$ $x_{m-n+2}^2\ldots,x_{n-1}^2$ in $\varphi(x_n)^2 \circ G=0,$ we get   $u_{1}=\cdots=u_{n-1}=0.$ \\
\underline{{\bf Case 4:} $m \geq 2n.$} Comparing the coefficients of 
$x_1x_2, x_{1}x_n,x_3x_4, \ldots,x_{n-1}x_{n}$ 
in $\varphi(x_n)^2 \circ G=0,$ we get $u_{1}=\cdots=u_{n-1}=0.$\\ 
This proves the claim. Now, comparing the coefficients of $x_n$ in $\varphi(x_n)^2 \circ G=0,$ we get $u_{n}=0$ (since $F$ does not contain a monomial divisible by $x_n^2$). This implies that $\varphi(x_n) $ has no linear terms and hence $\varphi$ is not an automorphism, a contradiction.
\end{proof}

We expect that the Theorem \ref{thm:CanGra} holds true without the assumption $h_2 \ge h_1$.  The problem is that, as far as we know,  the admissible Gorenstein not unimodal $h$-vectors are not classified even if $s=4.$   However, starting from an example by Stanley, 
we are able to construct a not canonically graded Gorenstein $K$-algebra with (not unimodal)  $h$-vector 
$(1,13,12,13,1).$  

\begin{corollary}
\label{Cor:CanGra}
Let $h=(1, h_1, h_2, h_1, 1)$ where $h_1 \leq 13 $ be a  Gorenstein O-sequence. Then every Gorenstein $K$-algebra with the Hilbert function $h$ is necessarily canonically graded if and only if $h_2 =\binom{h_1+1}{2}.$ 
\end{corollary}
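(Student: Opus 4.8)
The plan is to reduce everything to Theorem~\ref{thm:CanGra}, handling by hand the finitely many symmetric sequences with $h_1\le 13$ that are not covered by it. The ``if'' direction needs no hypothesis on $h_2$: if $h_2=\binom{h_1+1}{2}$ then $A$ is compressed, hence canonically graded by \cite[Theorem 3.1]{er15}. For the ``only if'' direction I assume $h_2\neq\binom{h_1+1}{2}$, i.e.\ $h_2<\binom{h_1+1}{2}$, and must produce a local Gorenstein $K$-algebra with Hilbert function $h$ that is not canonically graded. If $h_2\ge h_1$ this is exactly Theorem~\ref{thm:CanGra} applied with $h_3=h_1$, so the entire problem collapses to the range $h_2<h_1\le 13$.

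In that range I would first show that only one sequence survives. If $A$ is local Gorenstein with the symmetric $h$-vector $h=(1,h_1,h_2,h_1,1)$, then by \cite{iar94} the ring $gr_{\m}(A)$ is graded Gorenstein with the same $h$-vector, so $h$ is a graded Gorenstein O-sequence. By \cite[Theorem 3.2]{mz} this sequence is unimodal whenever $h_1\le 12$ (forcing $h_2\ge h_1$), and by the optimality of Stanley's example \cite{mz,stanley} the unique non-unimodal sequence at $h_1=13$ is $(1,13,12,13,1)$. Thus it remains only to exhibit a single non-canonically-graded Gorenstein $K$-algebra with $h$-vector $(1,13,12,13,1)$.

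For this I would imitate the proof of Theorem~\ref{thm:CanGra}. Start from a homogeneous quartic $F_0\in P=K[x_1,\dots,x_{13}]$ realizing $(1,13,12,13,1)$, so its second partials span a $12$-dimensional space $W\subseteq P_2$. For a linear form $v=\sum c_ix_i$ the element $v^2\circ F_0$ lies in $W$, so the condition $v^2\circ F_0=0$ amounts to $\dim_K W=12$ quadratic equations on $\mathbb{P}(P_1)=\mathbb{P}^{12}$; twelve hypersurfaces in $\mathbb{P}^{12}$ have a common zero over the algebraically closed field $K$, and after a linear change of coordinates carrying it to $x_{13}$ I obtain a quartic $F$ with $A_F\cong A_{F_0}$ and $x_{13}^2\circ F=0$, i.e.\ no monomial of $F$ is divisible by $x_{13}^2$. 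Put $G=F+x_{13}^3$. As in Theorem~\ref{thm:CanGra}, a $Q$-decomposition argument (cf.\ Corollary~\ref{Cor:ConForGor}) shows $A_G$ has Hilbert function $h$, while $x_{13}^2\in\ann_R(F)$. If $A_G$ were canonically graded, an isomorphism $\varphi\colon A_F\to A_G$ would give $\varphi(x_{13})^2\in\ann_R(G)$; writing $\varphi(x_{13})=\ell+(\text{higher order})$ with $\ell=\sum u_ix_i$, the degree-$2$ part of $\varphi(x_{13})^2\circ G=0$ reads $\ell^2\circ F=0$. From this, using the explicit shape of $F$ exactly as in the coefficient comparisons of Theorem~\ref{thm:CanGra}, one forces $u_1=\dots=u_{12}=0$, so $\ell=u_{13}x_{13}$; then the degree-$1$ part of the same equation reduces to $6u_{13}^2x_{13}=0$, because $x_{13}^2\circ F=0$ makes $x_{13}\circ F$ free of $x_{13}$, whence $u_{13}=0$. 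Thus $\ell=0$, contradicting that $\varphi$ is an automorphism.

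The main obstacle is this last step: controlling all second-partial-singular directions of Stanley's quartic. The dimension count only guarantees that a singular direction exists, whereas the argument needs that, after the coordinate change, $x_{13}$ is essentially the only one, so that $\ell^2\circ F=0$ forces every $u_i$ to vanish. Pinning this down requires an explicit normal form for $F$ and the monomial-by-monomial comparison it enables; this is the computational heart of the proof for the single remaining sequence $(1,13,12,13,1)$, and is precisely where the tools \cite{singular} and \cite{cocoa} used by the authors would carry out the verification.
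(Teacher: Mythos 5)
Your reduction is exactly the paper's: the ``if'' direction via the compressed case, the case $h_2\ge h_1$ via Theorem~\ref{thm:CanGra}, the observation that $gr_{\m}(A)\cong Q(0)$ makes $h$ a \emph{graded} Gorenstein O-sequence, and the appeal to \cite{mz} to conclude that $(1,13,12,13,1)$ is the only surviving non-unimodal sequence. But your treatment of that one remaining sequence has a genuine gap, and it is not merely the computational bookkeeping you defer to a computer: the strategy itself breaks down. The paper uses Stanley's explicit quartic $F=\sum_{i=1}^{10}x_i\mu_i$ in $K[x_1,\dots,x_{10},x,y,z]$, where $\mu_1,\dots,\mu_{10}$ are the cubic monomials in $x,y,z$. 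For this $F$ the linear forms $\ell$ with $\ell^2\circ F=0$ are \emph{exactly} the span of $x_1,\dots,x_{10}$: a ten-dimensional space, not a single direction. So your hoped-for conclusion that ``$\ell^2\circ F=0$ forces every $u_i$ to vanish'' is false, and the obstacle you flag at the end is not a missing normal form but an impossibility. Consequently a perturbation by the cube of \emph{one} singular direction, $G=F+x_{13}^3$, cannot yield the contradiction: the degree-two comparison only kills the three coefficients along $x,y,z$, and the degree-one comparison only kills the coefficient along the single perturbed direction, leaving a nine-dimensional space of admissible linear parts for $\varphi(x_{13})$.

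The paper's fix is to perturb along \emph{all} ten square-annihilating directions at once: it sets $G=F+x_1^3+\cdots+x_{10}^3$ and uses $x_1^2\in\ann_R(F)$ (automatic, since $F$ is multilinear in $x_1,\dots,x_{10}$). Writing $\varphi(x_1)=\sum_{i=1}^{10}u_ix_i+u_{11}x+u_{12}y+u_{13}z+\cdots$, the degree-two coefficients of $x_1x$, $x_7y$, $x_{10}z$ in $\varphi(x_1)^2\circ G=0$ give $u_{11}=u_{12}=u_{13}=0$, and then the degree-one coefficients of $x_1,\dots,x_{10}$ — which now pick up the contributions $6u_i^2x_i$ from the ten added cubes — kill $u_1,\dots,u_{10}$ simultaneously. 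Your existence-of-a-singular-direction argument via twelve quadrics in $\mathbb{P}^{12}$ is correct but produces strictly less than what the argument needs; to repair your proof you must both exhibit the explicit $F$ and perturb by a sum of cubes spanning the full singular locus, which is precisely what the paper does.
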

\begin{proof}
If a local Gorenstein $K$-algebra $A$ has  Hilbert function  $h=(1, h_1, h_2, h_1, 1), $ then by considering Q-decomposition of $gr_\m(A)$ we conclude that $gr_\m(A)\cong Q(0).$ This implies that $h$ is also the 
Hilbert function of a graded Gorenstein $K$-algebra. By \cite[Theorem 3.2]{mz} if $h_1\leq 12,$ then the Hilbert function of a graded Gorenstein $K$-algebra is unimodal. Hence by Theorem \ref{thm:CanGra} the result follows.

If $h_1=13$ and $h$ is unimodal, then the assertion follows from Theorem \ref{thm:CanGra}. Now, by \cite[Theorem 3.2]{mz} the only nonunimodal graded Gorenstein O-sequence with $h_1=13$ is $h=(1,13,12,13,1).$ 
In this case we write $P=[x_1,\ldots,x_{10},x,y,z].$ Let  
\begin{eqnarray*}
F=\sum_{i=1}^{10}x_i \mu_i,
\end{eqnarray*}
where $ \mu=\{x^3,x^2y,x^2z,xy^2,xyz,xz^2,y^3,y^2z,yz^2,z^3\}=\{\mu_1,\ldots,\mu_{10}\}.$ 
Let $G=F+x_1^3+\cdots+x_{10}^3.$ 
Then $A_{G}$ has the Hilbert function $h.$ 
We claim that $A_G$ is not canonically graded. Suppose $A_G$ is canonically graded. Then $A_G \cong A_F.$ Let $\varphi:A_F \to A_G$ be a $K$-algebra automorphism. Since $x_1^2 \in \ann_R(F),$ $\varphi(x_1)^2 \in  \ann_R(G).$ Let 
$$\varphi(x_1)=u_1x_1+\cdots+u_{10}x_{10}+u_{11}x+u_{12}y+u_{13}z+\sum_{\ui \in \mathbb{N}^n,~|\ui| \geq 2} a_{\ui}x^{\ui}.$$ 
Comparing the coefficients of $x_1x,x_7y,x_{10}z$ in $\varphi(x_{1})^2 \circ G=0,$ we get $u_{11}=u_{12}=u_{13}=0.$ Now, comparing the coefficients of $x_1,\ldots,x_{10}$ in $\varphi(x_{1})^2 \circ G=0,$ we get $u_1=\cdots=u_{10}=0.$ This implies that $\varphi(x_1)$ has no linear terms and thus $\varphi$ is not an automorphism, a contradiction.
\end{proof}

\newcommand{\etalchar}[1]{$^{#1}$}
\def\cfudot#1{\ifmmode\setbox7\hbox{$\accent"5E#1$}\else
  \setbox7\hbox{\accent"5E#1}\penalty 10000\relax\fi\raise 1\ht7
  \hbox{\raise.1ex\hbox to 1\wd7{\hss.\hss}}\penalty 10000 \hskip-1\wd7\penalty
  10000\box7}

\end{document}